\providecommand{\U}[1]{\protect\rule{.1in}{.1in}}
\providecommand{\U}[1]{\protect\rule{.1in}{.1in}}
\newtheorem{theorem}{Theorem}[section]
\newtheorem{corollary}[theorem]{Corollary}
\newtheorem{problem}[theorem]{Problem}
\numberwithin{equation}{section}
\begin{document}
\title{Sharp coincidences for absolutely summing multilinear operators}
\author[D. Pellegrino ]{Daniel Pellegrino}
\address{Departamento de Matem\'{a}tica \\
Universidade Federal da Para\'{\i}ba \\
58.051-900 - Jo\~{a}o Pessoa, Brazil.}
\email{pellegrino@pq.cnpq.br and dmpellegrino@gmail.com}
\thanks{2010 Mathematics Subject Classification: 46G25, 47L22, 47H60.}
\thanks{\textsuperscript{**}This note is part of the author's thesis which is being
written for the candidature to the position of Full Professor at Universidade
Federal da Para\'{\i}ba, Brazil.}
\keywords{Absolutely summing operators, multiple summing multilinear operators,
absolutely summing multilinear operators}

\begin{abstract}
In this note we prove the optimality of a family of known coincidence theorems
for absolutely summing multilinear operators. We connect our results with the
theory of multiple summing multilinear operators and prove the sharpness of
similar results obtained via the complex interpolation method.

\end{abstract}
\maketitle

\section{Preliminaries and background}

A long standing problem raised by Banach in \cite[page 40]{Banach32} (see also
the Problem 122 of the Scottish Book \cite{Mau}) asks whether in every
infinite-dimensional Banach space there exists an unconditionally convergent
series which fails to be absolutely convergent. The positive solution, in
1950, due to A. Dvoretzky and C. A. Rogers \cite{DR}, is probably the main
motivation for the appearance of the concept of absolutely summing operators
in the 1950-1960's with the works of A. Grothendieck \cite{grothendieck}, A
Pietsch \cite{piet} and J. Lindenstrauss and A. Pe\l czy\'{n}ski \cite{LP}.

Essentially, if $E$ and $F$ are Banach spaces, an absolutely summing operator
$u:E\rightarrow F$ is a linear operator that improves the convergence of
series in the following fashion: each unconditionally summable sequence
$\left(  x_{n}\right)  _{n=1}^{\infty}$ in $E$ is sent to an absolutely
summable sequence $\left(  u(x_{n})\right)  _{n=1}^{\infty}$ in $F$. More
generally, if $1\leq p\leq q<\infty,$ a continuous linear operator
$u:E\rightarrow F$ is absolutely $(q;p)$-summing if $\left(  u(x_{j})\right)
_{j=1}^{\infty}\in\ell_{q}(F)$ whenever
\[
\sup_{\varphi\in B_{E^{\ast}}}%
%TCIMACRO{\dsum \limits_{j=1}^{\infty}}%
%BeginExpansion
{\displaystyle\sum\limits_{j=1}^{\infty}}
%EndExpansion
\left\vert \varphi(x_{j})\right\vert ^{p}<\infty,
\]
where $E^{\ast}$ denotes the topological dual of $E$ and $B_{E^{\ast}}$
represents its closed unit ball (for the theory of absolutely summing
operators we refer to \cite{Di} and, for recent results, \cite{z, dies} and
references therein).

The space of all absolutely $(q;p)$-summing operators from $E$ to $F$ is
denoted by $\Pi_{q;p}(E;F)$ (or $\Pi_{p}(E;F)$ if $p=q$). It is not difficult
to prove that if $1\leq p\leq q<\infty$, then $\Pi_{p}\subset$ $\Pi_{q}.$
Henceforth the space of all bounded linear operators from a Banach space $E$
to a Banach space $F$ will be represented by $\mathcal{L}(E;F)$.

The theory of absolutely summing operators is nowadays a mandatory topic in
modern Banach Space Theory (see \cite{AK, grande, LT, Ry, W}), with somewhat
unexpected applications. For example, using tools of the theory of absolutely
summing operators we can prove that if $E=\ell_{1}$ or $E=c_{0}$ every
normalized unconditional basis is equivalent to the unit vector basis of $E$
(see \cite{LP, LT}). According to Pietsch \cite[page 365]{PH}, one of the most
profound results in Banach Space Theory is Grothendieck's \textit{theor\`{e}me
fondamental de la th\'{e}orie m\'{e}trique des produits tensoriels}, from the
famous Grothendieck's R\'{e}sum\'{e} \cite{grothendieck} (see also \cite{re}
for a modern approach), which can be rewritten in the language of absolutely
summing operators as follows:

\begin{theorem}
[Grothendieck]Every bounded linear operator from $\ell_{1}$ to any Hilbert
space is absolutely summing.
\end{theorem}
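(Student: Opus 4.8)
The plan is to obtain the theorem by recognizing it as the operator‑theoretic face of Grothendieck's inequality and then simply unwinding the definitions. Concretely, I would use the following (known) form of \emph{Grothendieck's inequality}, which is the analytic content of the fundamental theorem of the R\'esum\'e: there is a universal constant $K_G$ such that, for every finite scalar matrix $(a_{ij})$ with $\bigl|\sum_{i,j}a_{ij}s_it_j\bigr|\le 1$ whenever $\sup_i|s_i|\le 1$ and $\sup_j|t_j|\le 1$, and for all vectors $\xi_i,\eta_j$ in the closed unit ball of a Hilbert space $H$, one has $\bigl|\sum_{i,j}a_{ij}\langle\xi_i,\eta_j\rangle\bigr|\le K_G$. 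Granting this, the rest is a dictionary between the bilinear‑form language of the R\'esum\'e and the language of summing maps; I note that a genuinely self‑contained proof would have to include a proof of the inequality itself (via, say, Krivine's argument or the tensorial one), which is really the whole substance of the matter. (An alternative route would be to combine the ``little Grothendieck theorem'' for operators out of $\mathcal{C}(K)$ with a duality argument, but that is more roundabout.)

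First I would reduce to a finite estimate: $u\in\mathcal{L}(\ell_1;H)$ is absolutely $(1;1)$‑summing if and only if there is a constant $C$ with $\sum_{j=1}^{N}\|u(x_j)\|\le C\sup_{\varphi\in B_{\ell_\infty}}\sum_{j=1}^{N}|\varphi(x_j)|$ for every finite family $x_1,\dots,x_N\in\ell_1$, which is the usual equivalence between the definition and its ``finite'' version. So I fix such a family, normalize $\|u\|\le 1$, set $M:=\sup_{\varphi\in B_{\ell_\infty}}\sum_{j=1}^{N}|\varphi(x_j)|$, write $y_k:=u(e_k)$ (so that $\|y_k\|\le\|u\|\le 1$) and $x_j=(x_j^{(k)})_k$, and choose unit vectors $\eta_j\in H$ norming $u(x_j)$, i.e. $\|u(x_j)\|=\langle u(x_j),\eta_j\rangle$. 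Then
\[
\sum_{j=1}^{N}\|u(x_j)\|=\sum_{j=1}^{N}\Bigl\langle\sum_{k}x_j^{(k)}y_k,\ \eta_j\Bigr\rangle=\sum_{j,k}x_j^{(k)}\langle y_k,\eta_j\rangle .
\]
Next I would verify that $(a_{kj}):=(x_j^{(k)}/M)$ satisfies the hypothesis of Grothendieck's inequality: for $\|s\|_\infty\le 1$ and $\|t\|_\infty\le 1$ one has $\sum_{k,j}x_j^{(k)}s_kt_j=\sum_j t_j\langle x_j,s\rangle$, whose modulus is at most $\sum_j|\langle x_j,s\rangle|\le M$ since $s\in B_{\ell_\infty}=B_{(\ell_1)^*}$. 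Applying the inequality with $\xi_k=y_k$ then gives $\sum_{j,k}x_j^{(k)}\langle y_k,\eta_j\rangle\le K_G M$, which is exactly the desired bound with $C=K_G$; hence $u\in\Pi_1(\ell_1;H)$ with $\pi_1(u)\le K_G\|u\|$.

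The only point that needs care, once the inequality is available, is that the index $k$ runs over an infinite set while the classical inequality is stated for finite matrices — this is the one technical obstacle. I would dispose of it by truncation: since each $x_j\in\ell_1$, for every $\varepsilon>0$ there is $K$ with $\sum_{j\le N}\sum_{k>K}|x_j^{(k)}|<\varepsilon$, so replacing each $x_j$ by the vector of its first $K$ coordinates perturbs the left‑hand side by at most $\varepsilon$ and changes $M$ by at most $\varepsilon$; applying the finite inequality to the truncations and letting $\varepsilon\to 0$ completes the argument. Thus every bounded linear operator from $\ell_1$ into a Hilbert space is absolutely summing.
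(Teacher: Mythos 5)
The paper states this theorem purely as classical background, with no proof of its own (it simply cites Grothendieck's R\'esum\'e and its modern treatments), so there is no in-paper argument to compare against. Your derivation is the standard and correct one: you reduce to the finite summing inequality, write $\sum_j\|u(x_j)\|$ as a bilinear expression $\sum_{j,k}x_j^{(k)}\langle u(e_k),\eta_j\rangle$ against norming vectors, check that the normalized matrix $(x_j^{(k)}/M)$ satisfies the scalar hypothesis of Grothendieck's inequality, and conclude $\pi_1(u)\le K_G\|u\|$; the truncation step correctly handles the passage from finite matrices to the infinite index $k$. The one caveat, which you state explicitly yourself, is that all of the substance lives in Grothendieck's inequality, which you take as a black box; as a deduction from that inequality the argument is complete and matches the route taken in the standard references (e.g.\ Diestel--Jarchow--Tonge) that the paper points to.
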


This kind of result, in the modern terminology, is called \textit{coincidence
result}. The notion of cotype of a Banach space appeared in the 70's with
works of J. Hoffmann-J\o rgensen \cite{HJ}, B. Maurey \cite{Ma2}, S.
Kwapie\'{n} \cite{K22}, E. Dubinsky, A. Pe\l czy\'{n}ski, H.P. Rosenthal
\cite{DPR}, H.P. Rosenthal \cite{Ro} among others.

A Banach space $E$ has cotype $s\in\lbrack2,\infty)$ if there is a constant
$C\geq0$ so that, for all positive integer $n$ and all $x_{1},...,x_{n}$ in
$E$, we have%
\begin{equation}
\left(  \underset{i=1}{\overset{n}{%
%TCIMACRO{\dsum }%
%BeginExpansion
{\displaystyle\sum}
%EndExpansion
}}\left\Vert x_{i}\right\Vert ^{s}\right)  ^{1/s}\leq C\left(  \int_{0}%
^{1}\left\Vert \underset{i=1}{\overset{n}{%
%TCIMACRO{\dsum }%
%BeginExpansion
{\displaystyle\sum}
%EndExpansion
}}r_{i}\left(  t\right)  x_{i}\right\Vert ^{2}dt\right)  ^{1/2},\label{2.3}%
\end{equation}
where, for all $i$, $r_{i}$ represents the $i$-th Rademacher function. By
$\cot E$ we denote the infimum of the cotypes assumed by $E$, i.e.,%
\[
\cot E:=\inf\left\{  2\leq q<\infty;E\text{ has cotype }q\right\}  .
\]
It is important to recall that the infimum in the definition of $\cot E$ may
not be achieved by $E$. The straight relation between cotype and absolutely
summing operators can be seen in numerous works. For instance:

\begin{itemize}
\item (B. Maurey and G. Pisier \cite{pisier} ) If $\dim E=\infty$, then
\[
\cot E=\inf\left\{  r:\Pi_{r;1}(E;E)=\mathcal{L}(E;E)\right\}  .
\]

\item (M. Talagrand \cite{T2, T1}) A Banach space has cotype $s>2$ if and only
if%
\[
\Pi_{s;1}(E;E)=\mathcal{L}(E;E)
\]
and the result fails for $s=2.$

\item (B. Maurey \cite{Ma2}, L. Schwartz \cite{LS}) If $F$ is an
infinite-dimensional Banach space with cotype $s>2$, then%
\begin{equation}
\inf\left\{  r:\Pi_{r}(C(K);F)=\mathcal{L}(C(K);F)\right\}  =s \label{motiv}%
\end{equation}
and the infimum is \textit{not} attained.

\item (G. Botelho \textit{et al} \cite{z}) If $2\leq r<\cot F$ and $\dim
E=\dim F=\infty$, then
\begin{equation}
\Pi_{q,r}(E,F)=\mathcal{L}(E,F)\Rightarrow\mathcal{L}(\ell_{1},\ell_{\cot
F})=\Pi_{q,r}(\ell_{1},\ell_{\cot F}). \label{231}%
\end{equation}

\end{itemize}

In this note we will be concerned with results somewhat similar to
(\ref{motiv}) in the framework of multilinear mappings. More precisely we will
be concerned in proving the sharpness of a family of coincidence results from
the multilinear theory; we will show that, contrary to what happens in
(\ref{motiv}), the limit points (suprema and infima) will always be attained.

The nonlinear theory of absolutely summing operators was first sketched by
Pietsch in \cite{pi1} and since then it has been developed in different
directions, also with applications outside of Mathematical Analysis (see
\cite{qu}). We refer the interested reader to \cite{cha, cha2, FaJo, joed,
advances} and references therein.

Throughout this note $E_{1},\ldots,E_{n}$ and $F$ will stand for Banach spaces
over $\mathbb{K}$ $=\mathbb{R}$ or $\mathbb{C}$. If $s\in\left[
2,\infty\right)  $, the conjugate of $s$ will be represented by $s^{\ast}$ and
the class of all Banach spaces of cotype $s$ will be represented by
$\mathcal{C}^{(s)}$. By $\mathcal{L}(E_{1},\ldots,E_{n};F)$ (or $\mathcal{L}%
(^{n}E;F)$ if $E_{1}=\cdots=E_{n}=E$) we denote the Banach space of all
continuous $n$-linear mappings from $E_{1}\times\cdots\times E_{n}$ to $F$
with the usual sup norm.

If $1\leq q_{1},...,q_{n}\leq p<\infty$, an $n$-linear mapping $T\in
\mathcal{L}(E_{1},\ldots,E_{n};F)$ is multiple $(p;q_{1},...,q_{n})$-summing
if there exists a constant $C_{n}\geq0$ such that
\begin{equation}
\left(  \sum_{j_{1},\ldots,j_{n}=1}^{N}\left\Vert T(x_{j_{1}}^{(1)}%
,\ldots,x_{j_{n}}^{(n)})\right\Vert ^{p}\right)  ^{\frac{1}{p}}\leq C_{n}%
\prod_{k=1}^{n}\sup_{\varphi_{k}\in B_{E_{k}^{\ast}}}\left(  \sum_{j=1}%
^{N}\left\vert \varphi_{k}(x_{j}^{(k)})\right\vert ^{q_{k}}\right)  ^{\frac
{1}{q_{k}}} \label{lhs}%
\end{equation}
for every $N\in\mathbb{N}$ and any $x_{j_{k}}^{(k)}\in E_{k}$, with
$j_{k}=1,\ldots,N$ and $k=1,\ldots,n$. The space of all such operators is
denoted $%
%TCIMACRO{\tprod \nolimits_{m\left(  p;q_{1},...,q_{n}\right)  }}%
%BeginExpansion
{\textstyle\prod\nolimits_{m\left(  p;q_{1},...,q_{n}\right)  }}
%EndExpansion
(E_{1},...,E_{n};F)$ (or $%
%TCIMACRO{\tprod \nolimits_{m\left(  p;q_{1},...,q_{n}\right)  }}%
%BeginExpansion
{\textstyle\prod\nolimits_{m\left(  p;q_{1},...,q_{n}\right)  }}
%EndExpansion
(^{n}E;F)$ if $E_{1}=\cdots=E_{n}=E$)$.$ For the theory of multiple summing
multilinear operators and applications we refer to \cite{Acosta, defant,
defant2, matos, advances, PV, popa5} and references therein.

A famous result due to H.F. Bohnenblust and E. Hille, with applications in
harmonic analysis and analytic number theory, \cite{bh} asserts that%
\begin{equation}
\inf\left\{  r:\mathcal{L}(^{n}c_{0};\mathbb{C})=%
%TCIMACRO{\tprod \nolimits_{m\left(  r;1,...,1\right)  }}%
%BeginExpansion
{\textstyle\prod\nolimits_{m\left(  r;1,...,1\right)  }}
%EndExpansion
(^{n}c_{0};\mathbb{C})\text{ }\right\}  =\frac{2n}{n+1}\label{ooi}%
\end{equation}
and this infimum is attained. The proof is very deep and specially the proof
of the sharpness of the value $\frac{2n}{n+1}$ is highly non trivial. Using a
characterization of $\mathcal{L}(c_{0};X)$ in terms of weakly summable
sequences, this result can be reformulated (see \cite{arc}) to%
\begin{equation}
\inf\left\{  r:\mathcal{L}(E_{1},...,E_{n};\mathbb{C})=%
%TCIMACRO{\tprod \nolimits_{m\left(  r;1,...,1\right)  }}%
%BeginExpansion
{\textstyle\prod\nolimits_{m\left(  r;1,...,1\right)  }}
%EndExpansion
(E_{1},...,E_{n};\mathbb{C})\right\}  \leq\frac{2n}{n+1},\label{ooi22}%
\end{equation}
regardless of the infinite-dimensional complex Banach spaces $E_{1},...,E_{n}%
$. It is well-known that this result is also valid for real scalars (see
\cite{defant, ddiniz}). Similar results were investigated in \cite{PAMS2008,
REMC2010} and in special particular cases the optimality was also obtained
(see \cite[Thm. 5.14]{REMC2010}). The case $n=2$ in the Bohnenblust--Hille
inequality is the well-known Littlewood's $4/3$ inequality \cite{LLL} which
asserts that there is a positive constant $C$ such that
\[
\left(
%TCIMACRO{\dsum \limits_{i,j=1}^{\infty}}%
%BeginExpansion
{\displaystyle\sum\limits_{i,j=1}^{\infty}}
%EndExpansion
\left\vert A(e_{i},e_{j})\right\vert ^{\frac{4}{3}}\right)  ^{\frac{3}{4}}\leq
C\left\Vert A\right\Vert
\]
regardless of the continuous bilinear form $A$ on $c_{0}\times c_{0}$.

We will be interested in the optimality of results similar to (\ref{ooi}) and
(\ref{ooi22}), under certain cotype assumptions, for absolutely summing
multilinear operators. The notion of absolutely summing multilinear operators
is a slight variation of the notion of multiple summing operators; the
difference is that in this new approach we just sum diagonally. More
precisely, if $0<q_{1},...,q_{n},p<\infty$ and%
\[
\frac{1}{p}\leq\frac{1}{q_{1}}+\cdots+\frac{1}{q_{n}}%
\]
an $n$-linear mapping $T\in\mathcal{L}(E_{1},\ldots,E_{n};F)$ is absolutely
$(p;q_{1},...,q_{n})$-summing if there exists a constant $C_{n}\geq0$ such
that%
\[
\left(  \sum_{j=1}^{N}\left\Vert T(x_{j}^{(1)},\ldots,x_{j}^{(n)})\right\Vert
^{p}\right)  ^{\frac{1}{p}}\leq C_{n}\prod_{k=1}^{n}\sup_{\varphi_{k}\in
B_{E_{k}^{\ast}}}\left(  \sum_{j=1}^{N}\left\vert \varphi_{k}(x_{j}%
^{(k)})\right\vert ^{q_{k}}\right)  ^{\frac{1}{q_{k}}}%
\]
for every $N\in\mathbb{N}$ and any $x_{j}^{(k)}\in E_{k}$, with $j=1,\ldots,N$
and $k=1,\ldots,n$. The space of all such operators is denoted $%
%TCIMACRO{\tprod \nolimits_{as\left(  p;q_{1},...,q_{n}\right)  }}%
%BeginExpansion
{\textstyle\prod\nolimits_{as\left(  p;q_{1},...,q_{n}\right)  }}
%EndExpansion
(E_{1},...,E_{n};F)$ (and $\Pi_{as(p;q_{1},...,q_{n})}(^{n}E;F)$ if
$E_{1}=\cdots=E_{n}$). For both notions (absolutely summing multilinerar
operators and multiple summing multilinear operators) when $n=1$ we recover
the classical concept of absolutely summing linear operators (see \cite{Di}).
For absolutely summing multilinear operators a kind of version of the
Bohnenblust--Hille theorem is the well-known Defant--Voigt theorem: if
$n\geq2$ is a positive integer, then%
\begin{equation}
\inf\left\{  r:\mathcal{L}(E_{1},...,E_{n};\mathbb{K})=%
%TCIMACRO{\tprod \nolimits_{as\left(  r;1,...,1\right)  }}%
%BeginExpansion
{\textstyle\prod\nolimits_{as\left(  r;1,...,1\right)  }}
%EndExpansion
(E_{1},...,E_{n};\mathbb{K})\text{ for all }E_{1},...,E_{n}\right\}
\leq1\label{dvv}%
\end{equation}
and the value $r=1$ is attained.

A result due to G. Botelho \cite{ir} asserts that, under certain cotype
assumptions, the Defant--Voigt theorem (\ref{dvv}) can be improved even with
arbitrary Banach spaces $F$ in the place of the scalar field $\mathbb{K}$: if
$n\geq2$ is a positive integer, $s\in\lbrack2,\infty)$ and $F\neq\left\{
0\right\}  $ is any Banach space, then%
\begin{equation}
\inf\left\{  r:\mathcal{L}(E_{1},...,E_{n};F)=%
%TCIMACRO{\tprod \nolimits_{as\left(  r;1,...,1\right)  }}%
%BeginExpansion
{\textstyle\prod\nolimits_{as\left(  r;1,...,1\right)  }}
%EndExpansion
(E_{1},...,E_{n};F)\text{ for all }E_{j}\text{ in }\mathcal{C}^{(s)}\right\}
\leq\frac{s}{n} \label{fr}%
\end{equation}
and the value $r=\frac{s}{n}$ is attained.  Using recent results (see
\cite{Ber, Bla, Pop}) it is also simple to conclude that
\[
\sup\left\{  r:\mathcal{L}(E_{1},...,E_{n};F)=%
%TCIMACRO{\tprod \nolimits_{as\left(  1;r,...,r\right)  }}%
%BeginExpansion
{\textstyle\prod\nolimits_{as\left(  1;r,...,r\right)  }}
%EndExpansion
(E_{1},...,E_{n};F)\text{ for all }E_{j}\text{ in }\mathcal{C}^{(s)}\right\}
\geq\frac{sn}{sn+s-n}\text{ }%
\]
and%
\[
\sup\left\{  r:\mathcal{L}(E_{1},...,E_{n};F)=%
%TCIMACRO{\tprod \nolimits_{as\left(  2;r,...,r\right)  }}%
%BeginExpansion
{\textstyle\prod\nolimits_{as\left(  2;r,...,r\right)  }}
%EndExpansion
(E_{1},...,E_{n};F)\text{ for all }E_{j}\text{ in }\mathcal{C}^{(s)}\right\}
\geq\frac{2sn}{2sn+s-2n},
\]
with both $r=\frac{sn}{sn+s-n}$ and $r=\frac{2sn}{2sn+s-2n}$\ attained.

Several recent papers have treated similar problems involving inclusion,
coincidence results and the geometry of the Banach spaces involved (see
\cite{REMC2010, BBB, Ju, St, Pop}). In fact, families of these kind of
coincidence results have been obtained by different works and techniques.
However, contrary to the case of multiple summing operators (in which some
optimal results are known), it seems to exist no available study on the
eventual sharpness of any coincidence result for absolutely summing
multilinear operators. In this note we prove that a huge family of coincidence
results is sharp and, in particular, we show that all the above inequalities
are optimal for $n\geq s$. We also stress that, in general, the hypothesis
$n\geq s$ can not be dropped.

\section{Results\label{trh}}

Very recently the following result was essentially proved independently by
different authors (see \cite[Thm 1.4]{Ber}\ and also \cite{Bla, Pop}):

If $E_{i}$ has finite cotype $s_{i}$ for $i=1,...,n$ and $z\geq\frac{q}{n}$,
then%
\begin{equation}%
%TCIMACRO{\tprod \nolimits_{as\left(  z;q,...,q\right)  }}%
%BeginExpansion
{\textstyle\prod\nolimits_{as\left(  z;q,...,q\right)  }}
%EndExpansion
(E_{1},...,E_{n};F)=%
%TCIMACRO{\tprod \nolimits_{as\left(  \frac{zqp}{nz\left(  q-p\right)
%+pq};p,...,p\right)  }}%
%BeginExpansion
{\textstyle\prod\nolimits_{as\left(  \frac{zqp}{nz\left(  q-p\right)
+pq};p,...,p\right)  }}
%EndExpansion
(E_{1},...,E_{n};F) \label{fim}%
\end{equation}
for all $F$ and%
\[
1\leq p\leq q<\min s_{i}^{\ast}\text{ if }s_{j}>2\text{ for some }j=1,...,n
\]
or%
\begin{equation}
1\leq p\leq q\leq2\text{ if }s_{j}=2\text{ for all }j=1,...,n. \label{ou}%
\end{equation}

In fact, in \cite[Thm 1.4]{Ber} the result is stated for $z\geq1$ instead of
$z\geq\frac{q}{n}$ but by using the H\"{o}lder inequality in its full
generality one can easily see that the above result in fact is valid for all
$z\geq\frac{q}{n}.$

In view of  (\ref{fim}), if each $E_{i}$ has finite cotype $s,$ by making
$p=1$ and
\[
\frac{zqp}{nz\left(  q-p\right)  +pq}=\frac{s}{n},
\]
we have
\[%
%TCIMACRO{\tprod \nolimits_{as\left(  \frac{s}{n};1,...,1\right)  }}%
%BeginExpansion
{\textstyle\prod\nolimits_{as\left(  \frac{s}{n};1,...,1\right)  }}
%EndExpansion
(E_{1},...,E_{n};F)=%
%TCIMACRO{\tprod \nolimits_{as\left(  \frac{sq}{qn-qsn+sn};q,...,q\right)  }}%
%BeginExpansion
{\textstyle\prod\nolimits_{as\left(  \frac{sq}{qn-qsn+sn};q,...,q\right)  }}
%EndExpansion
(E_{1},...,E_{n};F).
\]
for all $1\leq q<s^{\ast}$. \textit{A fortiori},
\begin{equation}
\mathcal{L}(E_{1},...,E_{n};F)=%
%TCIMACRO{\tprod \nolimits_{as\left(  \frac{sq}{qn-qsn+sn};q,...,q\right)  }}%
%BeginExpansion
{\textstyle\prod\nolimits_{as\left(  \frac{sq}{qn-qsn+sn};q,...,q\right)  }}
%EndExpansion
(E_{1},...,E_{n};F)\label{sh}%
\end{equation}
for all $1\leq q<s^{\ast}$.\ Note that when $s=2$ we do need $q<2$ (contrary
to what happens in (\ref{ou})). Alternatively, the coincidence (\ref{sh})
could also have been obtained by a direct combination of (\ref{fr}) and the
inclusion theorem for absolutely summing multilinear operators. Our main
result shows the sharpness of the above family of coincidences for all $n\geq
s.$ In the next section we show that, in general, the assumption that $n\geq
s$ can not be abstained.

\begin{theorem}
\label{t55}If $n$ $\geq s$ is a positive integer, $1\leq q<s^{\ast}$ and $F$
is an arbitrary nontrivial Banach space, then%
\begin{equation}\small {
{{{ \inf\left\{  r:\mathcal{L}(E_{1},...,E_{n};F)=%
%TCIMACRO{\tprod \nolimits_{as\left(  r;q,...,q\right)  }}%
%BeginExpansion
{\textstyle\prod\nolimits_{as\left(  r;q,...,q\right)  }}
%EndExpansion
(E_{1},...,E_{n};F)\text{ for all }E_{j}\text{ in }\mathcal{C}^{(s)}\right\}
=\frac{sq}{qn-qsn+sn}} } } } \label{eqw}%
\end{equation}
and
\begin{equation}\small {
{{{ \sup\left\{  t:\mathcal{L}(E_{1},...,E_{n};F)=%
%TCIMACRO{\tprod \nolimits_{as\left(  \frac{sq}{qn-qsn+sn};t,q,...,q\right)
%}}%
%BeginExpansion
{\textstyle\prod\nolimits_{as\left(  \frac{sq}{qn-qsn+sn};t,q,...,q\right)  }}
%EndExpansion
(E_{1},...,E_{n};F)\text{ for all }E_{j}\text{ in }\mathcal{C}^{(s)}\right\}
=q.} } } } \label{pppp}%
\end{equation}

\end{theorem}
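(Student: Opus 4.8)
The plan is to prove each of the two equalities by establishing two inequalities. The coincidences at the critical parameters are already in hand: for $r=\frac{sq}{qn-qsn+sn}$ this is precisely (\ref{sh}), which holds for every $E_1,\dots,E_n$ of cotype $s$ and every $F$, so the infimum in (\ref{eqw}) is $\le\frac{sq}{qn-qsn+sn}$; the same display with the first index equal to $q$ shows that the supremum in (\ref{pppp}) is $\ge q$. Since these critical values themselves lie in the respective sets, the infimum and supremum are moreover \emph{attained}. What remains is to show that for every $r<\frac{sq}{qn-qsn+sn}$ and every $t>q$ the corresponding coincidence fails for some $E_1,\dots,E_n\in\mathcal C^{(s)}$.

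A single example handles both. Put $E_j=\ell_s$ for all $j$; as $s\ge2$ the space $\ell_s$ has cotype $s$, hence $\ell_s\in\mathcal C^{(s)}$. Fix $b\in F$ with $\|b\|=1$ and let $T$ be the $n$-linear ``diagonal'' map
\[
T\colon\ell_s\times\cdots\times\ell_s\longrightarrow F,\qquad
T\bigl(x^{(1)},\dots,x^{(n)}\bigr)=\Bigl(\sum_{j=1}^{\infty}x_j^{(1)}\cdots x_j^{(n)}\Bigr)\,b.
\]
The first key point is that, because $n\ge s$, the map $T$ is bounded: by the generalized H\"older inequality with all $n$ exponents equal to $n$, followed by the contractive inclusion $\ell_s\hookrightarrow\ell_n$ (valid since $s\le n$),
\[
\Bigl|\sum_{j}x_j^{(1)}\cdots x_j^{(n)}\Bigr|\le\prod_{k=1}^{n}\Bigl(\sum_{j}|x_j^{(k)}|^{n}\Bigr)^{1/n}\le\prod_{k=1}^{n}\|x^{(k)}\|_{s},
\]
so $\|T\|\le1$ and $T\in\mathcal L(^{n}\ell_s;F)$.

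The second key point is a growth estimate for the summing norms of $T$, obtained by inserting the first $m$ unit vectors $e_1,\dots,e_m$ of $\ell_s$ into all $n$ slots simultaneously. Since $T(e_i,\dots,e_i)=b$, the left-hand side of the defining $(r;q,\dots,q)$-summing inequality equals $m^{1/r}$, while every one of the $n$ right-hand factors equals
\[
\sup_{\|\varphi\|_{s^\ast}\le1}\Bigl(\sum_{i=1}^{m}|\varphi_i|^{q}\Bigr)^{1/q}=m^{1/q-1/s^\ast},
\]
an elementary extremal problem (optimal $\varphi_i=m^{-1/s^\ast}$) in which $q<s^\ast$ is used. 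Hence if $T\in\Pi_{as(r;q,\dots,q)}(^{n}\ell_s;F)$ then $m^{1/r}\le C\,m^{n(1/q-1/s^\ast)}$ for all $m$, forcing $r\ge\frac{qs^\ast}{n(s^\ast-q)}=\frac{sq}{qn-qsn+sn}$; since $T\in\mathcal L$, this proves the remaining inequality in (\ref{eqw}). For (\ref{pppp}), running the same computation for the tuple $\bigl(\rho;t,q,\dots,q\bigr)$ with $\rho=\frac{sq}{qn-qsn+sn}$ replaces one right-hand factor by $m^{(1/t-1/s^\ast)_+}$; using $1/\rho=n(1/q-1/s^\ast)$, membership of $T$ in that class reduces to $1/q-1/s^\ast\le(1/t-1/s^\ast)_+$, which (again because $q<s^\ast$) holds only when $t\le q$. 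Thus the coincidence fails whenever $t>q$, which is the remaining inequality in (\ref{pppp}).

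The real difficulty is conceptual: one must guess that the bare diagonal form is the extremal test map and realize that $n\ge s$ is exactly the threshold keeping it bounded on $(\ell_s)^n$ --- for $n<s$ that map is unbounded, and this is the structural reason why the hypothesis $n\ge s$ cannot be discarded (a point taken up in the next section). Granting the example, the rest is routine: the H\"older estimate, the one-variable optimization $\sup\{\sum_{i\le m}|\varphi_i|^q:\sum_{i\le m}|\varphi_i|^{s^\ast}\le1\}=m^{1-q/s^\ast}$, the comparison of exponents, and the trivial verification that all the parameter tuples occurring are admissible in the sense of the definition of absolutely summing multilinear maps.
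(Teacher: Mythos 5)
Your proof is correct, and it is built on exactly the same extremal example as the paper's: the scalar ``trace'' form $T(x^{(1)},\dots,x^{(n)})=\bigl(\sum_j x_j^{(1)}\cdots x_j^{(n)}\bigr)v$ on $(\ell_s)^n$, bounded precisely because $n\geq s$, tested on the unit vectors $e_1,\dots,e_m$. The only real divergence is in how the failure of summability is extracted. The paper first invokes the Inclusion Theorem for absolutely summing multilinear operators to embed $\Pi_{as(\frac{sq-\varepsilon}{qn-qsn+sn};q,\dots,q)}$ into some $\Pi_{as(S_\varepsilon;s^{\ast},\dots,s^{\ast})}$, where the computation becomes trivial (the unit vectors of $\ell_s$ are weakly $s^{\ast}$-summable with constant $1$, while the diagonal values are constant); you instead compute the weak $\ell_q$-norm $\sup_{\|\varphi\|_{s^{\ast}}\leq1}\bigl(\sum_{i\leq m}|\varphi_i|^q\bigr)^{1/q}=m^{1/q-1/s^{\ast}}$ directly and compare growth rates in $m$. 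Your route is slightly more self-contained and economical: it dispenses with the Inclusion Theorem and with the $\varepsilon$, $S_\varepsilon$, $R_\varepsilon$ bookkeeping, and it delivers the exact threshold $1/r\leq n(1/q-1/s^{\ast})$ (and, for (\ref{pppp}), the condition $1/q-1/s^{\ast}\leq(1/t-1/s^{\ast})_+$, hence $t\leq q$) in a single line. The paper's route buys reuse of a standard cited tool and keeps the hands-on computation to the single case $q_i=s^{\ast}$, at the cost of an auxiliary exponent chase. One small point worth making explicit in your write-up: for $r$ so small that $1/r>n/q$ the class is not even admissible in the sense of the definition, so the coincidence fails vacuously there; you flag this as ``trivial verification,'' which is fine.
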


\begin{proof}
Given $\varepsilon>0,$ we will show that%
\[
\mathcal{L}(E_{1},...,E_{n};F)\neq%
%TCIMACRO{\tprod \nolimits_{as\left(  \frac{sq}{qn-qsn+sn}-\varepsilon
%;q,...,q\right)  }}%
%BeginExpansion
{\textstyle\prod\nolimits_{as\left(  \frac{sq}{qn-qsn+sn}-\varepsilon
;q,...,q\right)  }}
%EndExpansion
(E_{1},...,E_{n};F).
\]
It is obvious that it suffices to show that there exists a $\delta>0$ so that
\[
\mathcal{L}(E_{1},...,E_{n};F)\neq%
%TCIMACRO{\tprod \nolimits_{as\left(  \frac{sq-\varepsilon}{qn-qsn+sn}%
%;q,...,q\right)  }}%
%BeginExpansion
{\textstyle\prod\nolimits_{as\left(  \frac{sq-\varepsilon}{qn-qsn+sn}%
;q,...,q\right)  }}
%EndExpansion
(E_{1},...,E_{n};F)
\]
for all $0<\varepsilon<\delta.$ We begin by showing that there is an
$S_{\varepsilon}>\frac{sq-\varepsilon}{qn-qsn+sn}$ such that
\begin{equation}%
%TCIMACRO{\tprod \nolimits_{as\left(  \frac{sq-\varepsilon}{qn-qsn+sn}%
%;q,...,q\right)  }}%
%BeginExpansion
{\textstyle\prod\nolimits_{as\left(  \frac{sq-\varepsilon}{qn-qsn+sn}%
;q,...,q\right)  }}
%EndExpansion
(E_{1},...,E_{n};F)\subset%
%TCIMACRO{\tprod \nolimits_{as\left(  S_{\varepsilon};s^{\ast},...,s^{\ast
%}\right)  }}%
%BeginExpansion
{\textstyle\prod\nolimits_{as\left(  S_{\varepsilon};s^{\ast},...,s^{\ast
}\right)  }}
%EndExpansion
(E_{1},...,E_{n};F). \label{fai}%
\end{equation}
Recall that the Inclusion Theorem for absolutely summing multilinear operators
(see \cite[Prop. 3.3]{te} or \cite[Prop. 3.2]{monats}) asserts that%
\[%
%TCIMACRO{\tprod \nolimits_{as\left(  q;q_{1},...,q_{n}\right)  }}%
%BeginExpansion
{\textstyle\prod\nolimits_{as\left(  q;q_{1},...,q_{n}\right)  }}
%EndExpansion
\subset%
%TCIMACRO{\tprod \nolimits_{as\left(  p;p_{1},...,p_{n}\right)  }}%
%BeginExpansion
{\textstyle\prod\nolimits_{as\left(  p;p_{1},...,p_{n}\right)  }}
%EndExpansion
\]
whenever $0<q\leq p<\infty$ and $0<q_{j}\leq p_{j}<\infty$ and
\[
\frac{1}{q_{1}}+\cdots+\frac{1}{q_{n}}-\frac{1}{q}\leq\frac{1}{p_{1}}%
+\cdots+\frac{1}{p_{n}}-\frac{1}{p}.
\]

So we just need to check that%
\begin{equation}
\frac{n}{q}-\frac{qn-qsn+sn}{sq-\varepsilon}\leq\frac{n}{s^{\ast}}-\frac
{1}{S_{\varepsilon}}. \label{tr}%
\end{equation}
If $0<\varepsilon<\delta<\min\left\{  \frac{1}{n},q\right\}  $, a direct
calculation shows that%
\[
S_{\varepsilon}>\frac{sq\left(  sq-\varepsilon\right)  }{n\varepsilon\left(
s+q-sq\right)  }%
\]
satisfies (\ref{tr}). Now we just need to choose
\[
S_{\varepsilon}>\max\left\{  \frac{sq-\varepsilon}{qn-qsn+sn},\frac{sq\left(
sq-\varepsilon\right)  }{n\varepsilon\left(  s+q-sq\right)  }\right\}  .
\]
Now choose $E_{1}=\cdots=E_{n}=\ell_{s},$ $v\neq0$ in $F$ and define
$T:E_{1}\times\cdots\times E_{n}\rightarrow F$ by
\[
T\left(  \left(  x_{j}^{(1)}\right)  _{j=1}^{\infty},...,\left(  x_{j}%
^{(n)}\right)  _{j=1}^{\infty}\right)  =\left(
%TCIMACRO{\tsum \limits_{j=1}^{\infty}}%
%BeginExpansion
{\textstyle\sum\limits_{j=1}^{\infty}}
%EndExpansion%
%TCIMACRO{\tprod \limits_{k=1}^{n}}%
%BeginExpansion
{\textstyle\prod\limits_{k=1}^{n}}
%EndExpansion
x_{j}^{(k)}\right)  v.
\]
Note that $T$ is well-defined because $n\geq s$ (we just need to invoke
H\"{o}lder's inequality). Since
\[
\sup_{\varphi\in B_{\ell_{s}^{\ast}}}\sum_{j=1}^{N}\left\vert \varphi
(e_{j})\right\vert ^{s^{\ast}}=1
\]
and%
\[
\left\Vert T\left(  e_{j},...,e_{j}\right)  \right\Vert =\left\Vert
v\right\Vert \neq0
\]
for all $j,$ it is clear that $T$ fails to be absolutely $\left(
S_{\varepsilon};s^{\ast},...,s^{\ast}\right)  $-summing. Hence, from
(\ref{fai}) the map $T$ also fails to be absolutely $\left(  \frac
{sq-\varepsilon}{qn-qsn+sn};q,...,q\right)  $-summing.

In order to prove (\ref{pppp}) we just need to show that if $\varepsilon>0,$
then%
\[
\mathcal{L}(E_{1},...,E_{n};F)\neq%
%TCIMACRO{\tprod \nolimits_{as\left(  \frac{sq}{qn-qsn+sn};q+\varepsilon
%,q,...,q\right)  }}%
%BeginExpansion
{\textstyle\prod\nolimits_{as\left(  \frac{sq}{qn-qsn+sn};q+\varepsilon
,q,...,q\right)  }}
%EndExpansion
(E_{1},...,E_{n};F).
\]
It suffices to consider $\varepsilon>0$ so that $q+\varepsilon<s^{\ast}$. Now
we prove that that there exists an $R_{\varepsilon}>\frac{sq}{qn-qsn+sn}$ such
that
\[%
%TCIMACRO{\tprod \nolimits_{as\left(  \frac{sq}{qn-qsn+sn};q+\varepsilon
%,q,...,q\right)  }}%
%BeginExpansion
{\textstyle\prod\nolimits_{as\left(  \frac{sq}{qn-qsn+sn};q+\varepsilon
,q,...,q\right)  }}
%EndExpansion
(E_{1},...,E_{n};F)\subset%
%TCIMACRO{\tprod \nolimits_{as\left(  R_{\varepsilon};s^{\ast},...,s^{\ast
%}\right)  }}%
%BeginExpansion
{\textstyle\prod\nolimits_{as\left(  R_{\varepsilon};s^{\ast},...,s^{\ast
}\right)  }}
%EndExpansion
(E_{1},...,E_{n};F).
\]

By invoking again the Inclusion Theorem for absolutely summing multilinear
operators, it suffices to find a $R_{\varepsilon}$ so that
\begin{equation}
\frac{1}{q+\varepsilon}+\frac{n-1}{q}-\frac{qn-qsn+sn}{sq}\leq\frac{n}%
{s^{\ast}}-\frac{1}{R_{\varepsilon}}. \label{qe}%
\end{equation}
A straightforward calculation shows that
\[
R_{\varepsilon}>\max\left\{  \frac{q\left(  q+\varepsilon\right)
}{\varepsilon},\frac{sq}{qn-qsn+sn}\right\}
\]
satisfies (\ref{qe}). Now we follow the same idea of the final part of the
proof of the equality (\ref{eqw}).
\end{proof}

\begin{corollary}
If $s\in\left[  2,\infty\right)  $ and $n\geq s$ is a positive integer, then
{\small {{
\[
\inf\left\{  r:\mathcal{L}(E_{1},...,E_{n};F)=%
%TCIMACRO{\tprod \nolimits_{as\left(  r;1,...,1\right)  }}%
%BeginExpansion
{\textstyle\prod\nolimits_{as\left(  r;1,...,1\right)  }}
%EndExpansion
(E_{1},...,E_{n};F)\text{ for all }E_{j}\text{ in }\mathcal{C}^{(s)}\right\}
=\frac{s}{n},
\]
}%
\begin{equation}
{\sup\left\{  r:\mathcal{L}(E_{1},...,E_{n};F)=%
%TCIMACRO{\tprod \nolimits_{as\left(  1;r,...,r\right)  }}%
%BeginExpansion
{\textstyle\prod\nolimits_{as\left(  1;r,...,r\right)  }}
%EndExpansion
(E_{1},...,E_{n};F)\text{ for all }E_{j}\text{ in }\mathcal{C}^{(s)}\right\}
=\frac{sn}{sn+s-n},}\label{rb}%
\end{equation}%
\begin{equation}
{\sup\left\{  r:\mathcal{L}(E_{1},...,E_{n};F)=%
%TCIMACRO{\tprod \nolimits_{as\left(  2;r,...,r\right)  }}%
%BeginExpansion
{\textstyle\prod\nolimits_{as\left(  2;r,...,r\right)  }}
%EndExpansion
(E_{1},...,E_{n};F)\text{ for all }E_{j}\text{ in }\mathcal{C}^{(s)}\right\}
=\frac{2sn}{2sn+s-2n},\label{sa}}%
\end{equation}
}}and the infimum and both suprema are attained.
\end{corollary}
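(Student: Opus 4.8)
The plan is to obtain all three equalities by specialising Theorem~\ref{t55} and combining it with the coincidence \eqref{sh}; no new construction is needed, so the argument should be short.

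For the infimum I would simply apply \eqref{eqw} with $q=1$. This is admissible because $s<\infty$ forces $1<s^{\ast}$, and the substitution $q=1$ gives $\frac{s\cdot 1}{1\cdot n-1\cdot s\cdot n+s\cdot n}=\frac{s}{n}$, which is the claimed value. For the attainment I would invoke \eqref{sh} with $q=1$ (equivalently, Botelho's theorem \eqref{fr}), which already yields $\mathcal{L}(E_{1},\ldots,E_{n};F)=\prod\nolimits_{as(s/n;1,\ldots,1)}(E_{1},\ldots,E_{n};F)$ for every $E_{j}\in\mathcal{C}^{(s)}$ and every $F$; hence the infimum is in fact a minimum.

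For the two suprema I would argue uniformly, describing \eqref{rb} in detail, the assertion \eqref{sa} being entirely analogous (replace the constant $1$ by $2$ below). Set $q_{0}:=\frac{sn}{sn+s-n}$. A one-line computation shows that $\frac{sq}{qn-qsn+sn}=1$ precisely for $q=q_{0}$, that $q_{0}<s^{\ast}$ holds unconditionally, and that $q_{0}\ge 1$ is equivalent to $n\ge s$; therefore $q_{0}\in[1,s^{\ast})$ and Theorem~\ref{t55} applies with this value of $q$. Now \eqref{sh} with $q=q_{0}$ gives $\mathcal{L}(E_{1},\ldots,E_{n};F)=\prod\nolimits_{as(1;q_{0},\ldots,q_{0})}(E_{1},\ldots,E_{n};F)$ for all $E_{j}\in\mathcal{C}^{(s)}$ and all $F$, so $q_{0}$ belongs to the set in \eqref{rb}; this already gives $\sup\ge q_{0}$ together with attainment. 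To see that no $r>q_{0}$ belongs to that set, suppose $\mathcal{L}=\prod\nolimits_{as(1;r,\ldots,r)}$ for all $E_{j}\in\mathcal{C}^{(s)}$ with $r>q_{0}$. Since $q_{0}\le r$, replacing the last $n-1$ summing exponents $r$ by $q_{0}$ enlarges the class --- this is the elementary monotonicity $\prod\nolimits_{as(p;q_{1},\ldots,q_{n})}\subseteq\prod\nolimits_{as(p;q_{1}',\ldots,q_{n}')}$ whenever $q_{k}'\le q_{k}$ for all $k$ --- so, together with $\prod\nolimits_{as(1;r,q_{0},\ldots,q_{0})}\subseteq\mathcal{L}$, we get $\mathcal{L}=\prod\nolimits_{as(1;r,q_{0},\ldots,q_{0})}$ for all $E_{j}\in\mathcal{C}^{(s)}$. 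But \eqref{pppp} with $q=q_{0}$ asserts exactly that $\sup\{t:\mathcal{L}=\prod\nolimits_{as(1;t,q_{0},\ldots,q_{0})}\text{ for all }E_{j}\in\mathcal{C}^{(s)}\}=q_{0}$, which rules out $t=r>q_{0}$. Hence the supremum in \eqref{rb} equals $q_{0}=\frac{sn}{sn+s-n}$ and is attained. For \eqref{sa} I would rerun this verbatim with $\tilde q_{0}:=\frac{2sn}{2sn+s-2n}$: one has $\frac{sq}{qn-qsn+sn}=2$ iff $q=\tilde q_{0}$, and $\tilde q_{0}\in[1,s^{\ast})$ because $\tilde q_{0}<s^{\ast}$ always and $\tilde q_{0}\ge 1$ amounts to $2n\ge s$, which holds since $n\ge s$; then \eqref{sh} and \eqref{pppp} are used with $q=\tilde q_{0}$.

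I do not anticipate a genuine obstacle: the corollary is essentially bookkeeping on top of Theorem~\ref{t55} and \eqref{sh}. The two points that require care are (i) the direction of the slot-monotonicity --- increasing a right-hand summing exponent \emph{shrinks} the class, so the chain of inclusions must run from $\prod\nolimits_{as(1;r,\ldots,r)}$ \emph{into} $\prod\nolimits_{as(1;r,q_{0},\ldots,q_{0})}$ and not the reverse --- and (ii) checking that $q_{0}$ and $\tilde q_{0}$ fall in the admissible interval $[1,s^{\ast})$, which is exactly the step that uses the standing hypothesis $n\ge s$, in agreement with the paper's remark that this hypothesis cannot in general be removed.
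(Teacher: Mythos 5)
Your proposal is correct and follows exactly the route the paper intends: the corollary is stated without proof as an immediate specialization of Theorem~\ref{t55} (take $q=1$ in \eqref{eqw}, and $q=\frac{sn}{sn+s-n}$, resp.\ $q=\frac{2sn}{2sn+s-2n}$, in \eqref{pppp}), with attainment coming from \eqref{sh}. Your verifications that these values of $q$ lie in $[1,s^{\ast})$ precisely when $n\geq s$, and the slot-monotonicity bridge $\prod\nolimits_{as(1;r,\ldots,r)}\subseteq\prod\nolimits_{as(1;r,q_{0},\ldots,q_{0})}$ needed to pass from \eqref{pppp} to the suprema over equal exponents, are exactly the (elementary) details the paper leaves implicit, and both are carried out with the correct directions.
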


\section{Remarks and consequences of the main result}

In general, the hypothesis $n\geq s$ from Theorem \ref{t55} can not be
weakened. In fact, if $n<s$ we can choose $F=\mathbb{K}$ and $q=1$ and so%
\[
\frac{sq}{qn-qsn+sn}=\frac{s}{n}>1.
\]
Therefore Defant--Voigt Theorem (\ref{dvv}) shows that the estimate from
(\ref{eqw}) is not sharp.

Using (\ref{rb}), (\ref{sa}) and complex interpolation (as in \cite{monats,
Bla, PAMS2008, REMC2010, Ju}) one can obtain intermediate results. It seems
not obvious that the optimality of (\ref{rb}) and (\ref{sa}) imply the
optimality of the intermediate results obtained via complex interpolation. But
a straightforward computation shows that the intermediate results obtained via
the complex interpolation method are contained in the family of coincidences
(\ref{eqw}) and so are also optimal.

More precisely, using complex interpolation and (\ref{rb}), (\ref{sa}) we
conclude that, for all $0\leq\theta\leq1$ and all complex Banach spaces $F$ we
have (over the complex scalar field)%
\begin{equation}
\sup\left\{  t:\mathcal{L}(E_{1},...,E_{n};F)=%
%TCIMACRO{\tprod \nolimits_{as\left(  \frac{2}{1+\theta};t,...,t\right)  }}%
%BeginExpansion
{\textstyle\prod\nolimits_{as\left(  \frac{2}{1+\theta};t,...,t\right)  }}
%EndExpansion
\text{ for all }E_{j}\text{ in }\mathcal{C}^{(s)}\right\}  \geq\frac
{2sn}{s\theta+2sn+s-2n}.\label{ssss}%
\end{equation}
Using a complexification argument we can extend this result to real Banach
spaces. A straightforward inspection shows that if
\[
\frac{2}{1+\theta}=\frac{sq}{qn-qsn+sn}%
\]
in (\ref{pppp}) then
\[
\frac{2sn}{s\theta+2sn+s-2n}=q.
\]
We thus conclude that (\ref{ssss}) is sharp and
{\small{
\[
\sup\left\{  t:\mathcal{L}(E_{1},...,E_{n};F)=%
%TCIMACRO{\tprod \nolimits_{as\left(  \frac{2}{1+\theta};t,...,t\right)  }}%
%BeginExpansion
{\textstyle\prod\nolimits_{as\left(  \frac{2}{1+\theta};t,...,t\right)  }}
%EndExpansion
(E_{1},...,E_{n};F)\text{ for all }E_{j}\text{ in }\mathcal{C}^{(s)}\right\}
=\frac{2sn}{s\theta+2sn+s-2n}%
\]
}}
%}
%$\end{equation}
%}
and (obviously) the supremum is attained.

It is also interesting to compare our main result with similar results for
multiple summing operators. For example, from \cite[Corollary 5.7 + Theorem
5.14]{REMC2010} we conclude that for $s=2$ we have%
\begin{equation}
\sup\left\{  r:\mathcal{L}(E_{1},...,E_{n};F)=%
%TCIMACRO{\tprod \nolimits_{m\left(  2;r,...,r\right)  }}%
%BeginExpansion
{\textstyle\prod\nolimits_{m\left(  2;r,...,r\right)  }}
%EndExpansion
(E_{1},...,E_{n};F)\text{ for all }E_{j}\text{ in }\mathcal{C}^{(2)}\right\}
=\frac{2n}{2n-1}\label{sa2}%
\end{equation}
and the $\sup$ is attained; but, for $s>2$ the information is not sharp (see
\cite[Corollary 5.9]{REMC2010}):
\begin{equation}
\sup\left\{  r:\mathcal{L}(E_{1},...,E_{n};\mathbb{K})=%
%TCIMACRO{\tprod \nolimits_{m\left(  2;r,...,r\right)  }}%
%BeginExpansion
{\textstyle\prod\nolimits_{m\left(  2;r,...,r\right)  }}
%EndExpansion
(E_{1},...,E_{n};\mathbb{K})\text{ for all }E_{j}\text{ in }\mathcal{C}%
^{(s)}\right\}  \geq\frac{sn}{sn-1}\label{lk}%
\end{equation}
and it seems to be not known if $r=\frac{sn}{sn-1}$ is attained or not.

From (\ref{sa}) and (\ref{sa2}) we observe the difference between the
estimates for multiple summing and absolutely summing multilinear operators.
For example, if $n\geq s=2$ we have:%
\begin{align*}
&  \sup\left\{  r:\mathcal{L}(E_{1},...,E_{n};F)=%
%TCIMACRO{\tprod \nolimits_{m\left(  2;r,...,r\right)  }}%
%BeginExpansion
{\textstyle\prod\nolimits_{m\left(  2;r,...,r\right)  }}
%EndExpansion
(E_{1},...,E_{n};F)\text{ for all }E_{j}\text{ in }\mathcal{C}^{(2)}\right\}
=\frac{2n}{2n-1}\\
&  \leq\frac{2n}{n+1}=\sup\left\{  r:\mathcal{L}(E_{1},...,E_{n};F)=%
%TCIMACRO{\tprod \nolimits_{as\left(  2;r,...,r\right)  }}%
%BeginExpansion
{\textstyle\prod\nolimits_{as\left(  2;r,...,r\right)  }}
%EndExpansion
(E_{1},...,E_{n};F)\text{ for all }E_{j}\text{ in }\mathcal{C}^{(2)}\right\}
.
\end{align*}
Note that the inequality above is an equality if and only if $n=2$. So, if
$n=s=2$ then we have the following somewhat surprising equality
\begin{align}
&  \sup\left\{  r:\mathcal{L}(E_{1},E_{2};F)=%
%TCIMACRO{\tprod \nolimits_{m\left(  2;r,r\right)  }}%
%BeginExpansion
{\textstyle\prod\nolimits_{m\left(  2;r,r\right)  }}
%EndExpansion
(E_{1},E_{2};F)\text{ for all }E_{j}\text{ in }\mathcal{C}^{(2)}\right\}
\label{bgt}\\
&  =\sup\left\{  r:\mathcal{L}(E_{1},E_{2};F)=%
%TCIMACRO{\tprod \nolimits_{as\left(  2;r,r\right)  }}%
%BeginExpansion
{\textstyle\prod\nolimits_{as\left(  2;r,r\right)  }}
%EndExpansion
(E_{1},E_{2};F)\text{ for all }E_{j}\text{ in }\mathcal{C}^{(2)}\right\}
=\frac{4}{3}.\nonumber
\end{align}
On the other hand, for other values of $n$ and $s$ we have
\[
\frac{sn}{sn-1}<\frac{2sn}{2sn+s-2n}%
\]
and a generalization of \ (\ref{bgt}) to $s>2$ seems an interesting open
question. From (\ref{sa}) and from the natural inclusion $%
%TCIMACRO{\tprod \nolimits_{m\left(  2;r,...,r\right)  }}%
%BeginExpansion
{\textstyle\prod\nolimits_{m\left(  2;r,...,r\right)  }}
%EndExpansion
\subset%
%TCIMACRO{\tprod \nolimits_{as\left(  2;r,...,r\right)  }}%
%BeginExpansion
{\textstyle\prod\nolimits_{as\left(  2;r,...,r\right)  }}
%EndExpansion
$ we obtain the following complement of (\ref{lk}):

\begin{corollary}
\label{end} If $s\in(2,\infty)$ and $n\geq s$ is a positive integer, then
{\small {
\[
\sup\left\{  r:\mathcal{L}(E_{1},...,E_{n};\mathbb{K})=%
%TCIMACRO{\tprod \nolimits_{m\left(  2;r,...,r\right)  }}%
%BeginExpansion
{\textstyle\prod\nolimits_{m\left(  2;r,...,r\right)  }}
%EndExpansion
(E_{1},...,E_{n};\mathbb{K})\text{ for all }E_{j}\text{ in }\mathcal{C}%
^{(s)}\right\}  \in\left[  \frac{sn}{sn-1},\frac{2sn}{2sn+s-2n}\right]  .
\]
}}
\end{corollary}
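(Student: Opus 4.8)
The plan is to read off the two endpoints of the claimed interval from results already in hand: the left endpoint $\frac{sn}{sn-1}$ is precisely the content of the estimate (\ref{lk}), while the right endpoint $\frac{2sn}{2sn+s-2n}$ will come from the elementary inclusion of multiple summing operators into absolutely summing ones, combined with the sharp coincidence (\ref{sa}).

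First I would invoke (\ref{lk}), which asserts that
\[
\sup\left\{r:\mathcal{L}(E_{1},\ldots,E_{n};\mathbb{K})={\textstyle\prod\nolimits_{m(2;r,\ldots,r)}}(E_{1},\ldots,E_{n};\mathbb{K})\text{ for all }E_{j}\text{ in }\mathcal{C}^{(s)}\right\}\geq\frac{sn}{sn-1};
\]
this gives the lower bound for free. For the upper bound, fix any $r$ for which $\mathcal{L}(E_{1},\ldots,E_{n};\mathbb{K})={\textstyle\prod\nolimits_{m(2;r,\ldots,r)}}(E_{1},\ldots,E_{n};\mathbb{K})$ holds for all $E_{j}$ in $\mathcal{C}^{(s)}$. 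Because summing along the diagonal is a weaker requirement than summing over the whole cube, one always has
\[
{\textstyle\prod\nolimits_{m(2;r,\ldots,r)}}(E_{1},\ldots,E_{n};\mathbb{K})\subseteq{\textstyle\prod\nolimits_{as(2;r,\ldots,r)}}(E_{1},\ldots,E_{n};\mathbb{K})\subseteq\mathcal{L}(E_{1},\ldots,E_{n};\mathbb{K}),
\]
so the assumed equality upgrades to $\mathcal{L}(E_{1},\ldots,E_{n};\mathbb{K})={\textstyle\prod\nolimits_{as(2;r,\ldots,r)}}(E_{1},\ldots,E_{n};\mathbb{K})$ for all $E_{j}$ in $\mathcal{C}^{(s)}$. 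Applying (\ref{sa}) with $F=\mathbb{K}$ --- legitimate since $\mathbb{K}$ is a nontrivial Banach space and (\ref{sa}) holds for an arbitrary nontrivial $F$ --- forces $r\leq\frac{2sn}{2sn+s-2n}$. Taking the supremum over all such $r$ produces the right endpoint.

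Finally I would record that the interval is genuinely nonempty: since both denominators are positive when $n\geq s>2$, the inequality $\frac{sn}{sn-1}\leq\frac{2sn}{2sn+s-2n}$ reduces to $s\leq 2n-2$, which holds because $n\geq s>2$ forces the integer $n$ to be at least $3$. I do not expect any real obstacle; the corollary merely repackages (\ref{lk}) and (\ref{sa}). The one point deserving a moment's care is that (\ref{sa}) is being applied to the particular target space $\mathbb{K}$, which is exactly why it is worth having the previous corollary available for an arbitrary nontrivial $F$.
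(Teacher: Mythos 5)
Your proposal is correct and follows exactly the paper's route: the lower endpoint is read off from (\ref{lk}), and the upper endpoint follows from the natural inclusion ${\textstyle\prod\nolimits_{m(2;r,\ldots,r)}}\subset{\textstyle\prod\nolimits_{as(2;r,\ldots,r)}}$ together with the sharp coincidence (\ref{sa}) applied with $F=\mathbb{K}$. The only difference is that you spell out the details (and the nonemptiness of the interval) that the paper leaves as a one-line remark.
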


\section{Open Problems}

As we mentioned in the introduction, a famous result due to Grothendieck
asserts that every continuous linear operator from $\ell_{1}$ to any Hilbert
space $H$ is absolutely summing. In their seminal paper \cite{LP}
Lindenstrauss and Pe\l czy\'{n}ski prove that if we replace $\ell_{1}$ by
$\ell_{p}$ with $p\neq1$ then there is a continuous linear operator from
$\ell_{p}$ to $H$ which fails to be absolutely summing. In fact, the result of
Lindenstrauss and Pe\l czy\'{n}ski is quite stronger: if $E$ is an
infinite-dimensional Banach space with unconditional Schauder basis, $F$ is an
infinite-dimensional Banach space and every continuous linear operator from
$E$ to $F$ is absolutely summing then $E=\ell_{1}$ and $F$ is a Hilbert space.
The special role played by $\ell_{1}$ in the theory of absolutely summing
linear operators can be also seen in (\ref{231}) and \cite{BB, ben}.

In the nonlinear setting $\ell_{1}$ also plays a special role. For example, the following multilinear extension of Grothendieck's Theorem was recently proved in \cite{Ber}:
\[
\mathcal{L}(^{n}\ell_{1};\ell_{2})=%
%TCIMACRO{\tprod \nolimits_{as\left(  \frac{2}{n+1};1,...,1\right)  }}%
%BeginExpansion
{\textstyle\prod\nolimits_{as\left(  \frac{2}{n+1};1,...,1\right)  }}
%EndExpansion
(^{n}\ell_{1};\ell_{2}).
\]
Note that the case $n=1$ is precisely Grothendieck's Theorem. By using the inclusion theorem for absolutely summing multilinear operators we
conclude that%
\begin{align*}
\mathcal{L}(^{n}\ell_{1};\ell_{2})  & =%
%TCIMACRO{\tprod \nolimits_{as\left(  1;\frac{2n}{n+1},...,\frac{2n}%
%{n+1}\right)  }}%
%BeginExpansion
{\textstyle\prod\nolimits_{as\left(  1;\frac{2n}{n+1},...,\frac{2n}%
{n+1}\right)  }}
%EndExpansion
(^{n}\ell_{1};\ell_{2}),\\
\mathcal{L}(^{n}\ell_{1};\ell_{2})  & =%
%TCIMACRO{\tprod \nolimits_{as\left(  2;2,...,2\right)  }}%
%BeginExpansion
{\textstyle\prod\nolimits_{as\left(  2;2,...,2\right)  }}
%EndExpansion
(^{n}\ell_{1};\ell_{2})
\end{align*}
and, more generally, we have
\begin{equation}
\mathcal{L}(^{n}\ell_{1};\ell_{2})=%
%TCIMACRO{\tprod \nolimits_{as\left(  \frac{2}{\theta+1};\frac{2n}{\theta
%+n},...,\frac{2n}{\theta+n}\right)  }}%
%BeginExpansion
{\textstyle\prod\nolimits_{as\left(  \frac{2}{\theta+1};\frac{2n}{\theta
+n},...,\frac{2n}{\theta+n}\right)  }}
%EndExpansion
(^{n}\ell_{1};\ell_{2}).\label{shha}%
\end{equation}
regardless of the $\theta\in\lbrack0,1].$

\begin{problem}
Is (\ref{shha}) sharp for some $\theta\in\lbrack0,1]?$
\end{problem}

\begin{problem}
What is the precise value of the supremum in Corollary \ref{end}?
\end{problem}

\begin{problem}
Is there an equality similar to (\ref{bgt}) for $n\geq s$ and $\left(
s,n\right)  \neq\left(  2,2\right)  $?
\end{problem}

\bigskip

\end{document}